\providecommand{\U}[1]{\protect\rule{.1in}{.1in}}
\newtheorem{theorem}{Theorem}
\newtheorem{conclusion}[theorem]{Conclusion}
\newtheorem{example}{Example}
\newtheorem{remark}{Remark}
\newenvironment{proof}[1][Proof]{\noindent\textbf{#1.} }{\ \rule{0.5em}{0.5em}}
\begin{document}

\title{A few results concerning the Schur stability of the Hadamard powers and the
Hadamard products of complex polynomials}
\author{Micha\l \ G\'{o}ra\thanks{e-mail: gora@agh.edu.pl; This research work was partially supported by the Faculty of Applied
Mathematics AGH UST statutory tasks (grant no. 11.11.420.004) within subsidy
of Ministry of Science and Higher Education.} \\AGH University of Science and Technology,\\ Faculty of Applied Mathematics,\\ al.
Mickiewicza 30, 30-059 Krak\'{o}w, Poland}
\date{}
\maketitle

\begin{abstract}
For a complex polynomial
\[
f\left(  s\right)  =s^{n}+a_{n-1}s^{n-1}+\ldots+a_{1}s+a_{0}%
\]
and for a rational number $p$, we consider the Schur stability problem of the
$p$-th Hadamard power of $f$
\[
f^{\left[  p\right]  }\left(  s\right)  =s^{n}+a_{n-1}^{p}s^{n-1}+\ldots
+a_{1}^{p}s+a_{0}^{p}\text{.}%
\]
We show that there exist two numbers $p^{\ast}\geq0\geq p_{\ast}$ such that
$f^{\left[  p\right]  }$ is Schur stable for every $p>p^{\ast}$ and is not
Schur stable for $p<p_{\ast}$ (or vice versa, depending on $f$). Also, we give
simple sufficient conditions for the Schur stability of the Hadamard product
of two complex polynomials. Numerical examples complete and illustrate the results.

\end{abstract}

\section{Introduction}

Over two decades ago, in 1996, Garloff and Wagner \cite{Gar-Wag} provided an
interesting property of the Hurwitz stable polynomials. They proved that the
Hadamard product (i.e. element-wise multiplication) of two real Hurwitz stable
polynomials is again Hurwitz stable. An immediate consequence of the
Garloff--Wagner result is that the stability of $f$ implies that of
$f^{\left[  p\right]  }$, the $p$-th Hadamard power of $f$, for every positive
integer $p$. Gregor and Ti\v{s}er \cite{Gre-Tis} claimed that even more is
true, that is, that the $p$-th Hadamard power of a Hurwitz stable polynomial
is Hurwitz stable for every real power $p>1$. Unfortunately, as Bia\l as and
Bia\l as-Cie\.{z} proved in their recent work \cite{Bia-Cie}, they were wrong,
i.e. for a stable polynomial $f$, the polynomial $f^{\left[  p\right]  }$ does
not need to be Hurwitz stable for $p>1$.

Motivated by the work of Bia\l as and Bia\l as-Cie\.{z}, we will focus the
attention on the Schur stability problem of the Hadamard powers of complex
polynomials. It is known that the result of Garloff and Wagner does not extend
neither to the complex case nor to the class of the Schur stable polynomials
(see Bose and Gregor \cite{Bos} or again Garloff and Wagner \cite{Gar-Wag}).
The main aim of this work is to show that for a very wide class of complex
polynomials including, among others, unstable elements, it is possible to find
two numbers $p^{\ast}\geq0\geq$ $p_{\ast}$ depending on $f$ and such that the
$p$--th Hadamard power of $f$ is Schur stable for every $p>p^{\ast}$ and is
not Schur stable for every $p<p_{\ast}$ (or vice versa). Some attention is
also paid to possibility of construction of families of the Schur stable
polynomials with complex coefficients that are closed under the Hadamard
multiplication. The obtained results complete and generalize those given by
Garloff and Wagner~\cite{Gar-Wag}, Gregor and Ti\v{s}er~\cite{Gre-Tis} and
Bia\l as and Bia\l as-Cie\.{z}~\cite{Bia-Cie}.

\section{Preliminary results}

\subsection{Basic notations}

We use standard notation: $\mathbb{Q}$, $\mathbb{R}$ and $\mathbb{C}$ stand
for the set of rational numbers, real numbers and complex numbers,
respectively; $\mathbb{\pi}_{n}\left(  \mathbb{C}\right)  $ stands for the
family of $n$--th degree monic polynomials with complex coefficients;
$\left\vert \cdot\right\vert $ denotes the moduli of a complex number and $i$
stands for the imaginary unit.

\subsection{Stable polynomials}

A polynomial is said to be Schur stable (shortly \textit{stable}) if all its
zeros lie in the open unit disc. From among many sufficient conditions for the
stability of a polynomial we recall the one following from the bound for the
moduli of zeros of polynomials given by Fujiwara \cite{Fujiwara}: a polynomial
$f\in\mathbb{\pi}_{n}\left(  \mathbb{C}\right)  $ of the form $f\left(
s\right)  =s^{n}+a_{n-1}s^{n-1}+\ldots+a_{1}s+a_{0}$ is stable if it satisfies
\textit{the stability condition, }i.e.\textit{ there exist }$\left\{
\lambda_{k}\right\}  $\textit{, a sequence of positive numbers whose sum does
not exceed~}$1$, \textit{such that the following condition holds} \textit{ }%
\begin{equation}
\left\vert a_{k}\right\vert <\lambda_{k},\text{\quad\textit{for} }k\in N_{f}
\label{eq2}%
\end{equation}
\textit{where} $N_{f}=\{k\in\left\{  0,\ldots,n-1\right\}  :a_{k}\neq
0\}$\label{eq211} (the proof of sufficiency of (\ref{eq2}) for the stability
of $f$ can be easily derived from Fujiwara's work \cite{Fujiwara}, but for the
sake of completeness of the article we present it in Appendix A). It seems to
be interesting and important to note that the stability condition (\ref{eq2})
is \textit{sharp} in the sense that for every $\varepsilon\geq0$ and for every
sequence of positive numbers $\lambda_{0},\ldots,\lambda_{n-1}$ summing up to
$1+\varepsilon$, the polynomial $s\rightarrow s^{n}-%
{\textstyle\sum\nolimits_{k=0}^{n-1}}
\lambda_{k}s^{k}$ is unstable.

\subsection{The Hadamard product and the Hadamard powers of polynomials}

For two polynomials $f,g\in\mathbb{\pi}_{n}\left(  \mathbb{C}\right)  ,$%
\begin{equation}%
\begin{array}
[c]{l}%
f\left(  s\right)  =s^{n}+a_{n-1}s^{n-1}+\ldots+a_{1}s+a_{0}\\
g\left(  s\right)  =s^{n}+b_{n-1}s^{n-1}+\ldots+b_{1}s+b_{0}%
\end{array}
\label{w01}%
\end{equation}
we define their Hadamard product $f\circ g$ as an $n$--th degree polynomial of
the form%
\[
\left(  f\circ g\right)  \left(  s\right)  =s^{n}+a_{n-1}b_{n-1}s^{n-1}%
+\ldots+a_{1}b_{1}s+a_{0}b_{0}\text{.}%
\]
In turn, for $p\in\mathbb{Q}$ the polynomial $f^{\left[  p\right]  }$%
\[
f^{\left[  p\right]  }\left(  s\right)  =s^{n}+a_{n-1}^{p}s^{n-1}+\ldots
+a_{1}^{p}s+a_{0}^{p},
\]
is called the $p$-th Hadamard power of $f$ (we put, by definition, that
$0^{p}=0$ for $p\in\mathbb{Q}$). If $p$ is an integer then $f^{[p]}$ is a
polynomial. However, if $p$ is a non-integer rational number, say $p=k/m$ with
$k$ and $m$ relatively prime integers, then $p$--th power of the complex
number $a_{i}$ is not a number but is a set of $m$ complex numbers whose
$m$-th power gives $a_{i}^{k}$. In other words, for $a_{j}=\left\vert
a_{j}\right\vert \left(  \cos\alpha_{j}+i\sin\alpha_{j}\right)  $ we have
$a_{j}^{p}=\left\{  a_{j,0}^{p},\ldots,a_{j,m-1}^{p}\right\}  ,$ where
\begin{equation}
a_{j,l}^{p}=\left\vert a_{j}\right\vert ^{p}\left(  \cos\left(  p\alpha
_{j}+2\pi l/m\right)  +i\sin\left(  p\alpha_{j}+2\pi l/m\right)  \right)
\text{,} \label{w03}%
\end{equation}
for $l=0,\ldots,m-1$. In that case, the $p$-th Hadamard power of a polynomial
should be understood as a set of $m^{n}$ polynomials
\begin{equation}
s\rightarrow s^{n}+a_{n-1,l_{n-1}}^{p}s^{n-1}+\ldots+a_{1,l_{1}}%
^{p}s+a_{0,l_{0}}^{p} \label{w03a}%
\end{equation}
whose coefficients are calculated as in (\ref{w03}).

\section{Main results}

\subsection{The Schur stability of the Hadamard powers of a polynomial}

Let, for $f$ as in (\ref{w01}) and for $N_{f}\ $ as on page \pageref{eq211},
\[
\Lambda_{f}=\left\{  \left\{  \lambda_{k}\right\}  _{k\in N_{f}}:\lambda
_{k}\in(0,1],\text{ }%
{\textstyle\sum\nolimits_{k\in N_{f}}}
\lambda_{k}\leq1\right\}  \text{.}%
\]

We are now ready to formulate the main result of this work.

\begin{theorem}
\label{th1} For $f\in\mathbb{\pi}_{n}\left(  \mathbb{C}\right)  $ as in
(\ref{w01}) the following hold:

\begin{enumerate}
\item[(a)] if $N_{f}$ is non-empty and $\left\vert a_{k}\right\vert <1$ for
$k\in N_{f}$, then $f^{\left[  p\right]  }$ is Schur stable for every
$p>p_{\max}^{\ast}\geq0$ where%
\begin{equation}
p_{\max}^{\ast}=\inf\limits_{\left\{  \lambda_{k}\right\}  \in\Lambda_{f}}%
\max_{k\in N_{f}}\frac{\ln\lambda_{k}}{\ln\left\vert a_{k}\right\vert };
\label{wth1}%
\end{equation}

\item[(b)] if $N_{f}$ is non-empty and $\left\vert a_{k}\right\vert >1$ for
$k\in N_{f}$, then $f^{\left[  p\right]  }$ is Schur stable for every
$p<p_{\min}^{\ast}\leq0$ where%
\begin{equation}
p_{\min}^{\ast}=\sup_{\left\{  \lambda_{k}\right\}  \in\Lambda_{f}}\min_{k\in
N_{f}}\frac{\ln\lambda_{k}}{\ln\left\vert a_{k}\right\vert }; \label{wth2}%
\end{equation}

\item[(c)] if $N_{f}$ is empty, then $f^{\left[  p\right]  }$ is stable for
every $p\in\mathbb{Q}$.
\end{enumerate}
\end{theorem}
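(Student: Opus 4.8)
The plan is to reduce every case to the stability condition (\ref{eq2}) recalled above, which guarantees that $f^{[p]}$ is Schur stable whenever we can exhibit a sequence $\{\lambda_k\}_{k\in N_f}$ of positive numbers summing to at most $1$ with $|a_k^p| < \lambda_k$ for all $k\in N_f$. Since $|a_k^p| = |a_k|^p$ regardless of which root branch in (\ref{w03}) we select, the multivaluedness of $f^{[p]}$ for non-integer $p$ is harmless here: it suffices to control the moduli, so the same $\{\lambda_k\}$ works simultaneously for all $m^n$ polynomials in (\ref{w03a}). This observation disposes of the set-valued subtlety up front and lets me treat $p$ as acting only on the moduli.

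For case (c), if $N_f$ is empty then every coefficient $a_k$ vanishes for $k\in\{0,\ldots,n-1\}$, so $f(s)=s^n$, and by the convention $0^p=0$ we get $f^{[p]}(s)=s^n$ for every $p\in\mathbb{Q}$; this polynomial has all zeros at the origin and is trivially stable. For case (a), fix any admissible $\{\lambda_k\}\in\Lambda_f$. I would observe that because $|a_k|<1$, the logarithm $\ln|a_k|$ is negative, so the inequality $|a_k|^p < \lambda_k$ is equivalent (taking logarithms and dividing by the negative quantity $\ln|a_k|$, which reverses the inequality) to $p > \ln\lambda_k/\ln|a_k|$. Hence $f^{[p]}$ satisfies the stability condition for this particular $\{\lambda_k\}$ precisely when $p > \max_{k\in N_f}\ln\lambda_k/\ln|a_k|$. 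Taking the infimum over all $\{\lambda_k\}\in\Lambda_f$ yields the threshold $p^{\ast}_{\max}$ in (\ref{wth1}): for any $p > p^{\ast}_{\max}$ I can choose $\{\lambda_k\}$ realizing a max below $p$, so the stability condition holds and $f^{[p]}$ is stable. Case (b) is the mirror image: now $|a_k|>1$ forces $\ln|a_k|>0$, so the inequality $|a_k|^p<\lambda_k$ becomes $p < \ln\lambda_k/\ln|a_k|$, the relevant bound over $k\in N_f$ is the \emph{minimum}, and optimizing over $\Lambda_f$ calls for the supremum, giving (\ref{wth2}).

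It remains to verify the sign claims $p^{\ast}_{\max}\geq 0$ and $p^{\ast}_{\min}\leq 0$. In case (a), since each $\lambda_k\in(0,1]$ gives $\ln\lambda_k\leq 0$ while $\ln|a_k|<0$, every ratio $\ln\lambda_k/\ln|a_k|$ is nonnegative, so the max and hence the infimum are nonnegative; the symmetric sign argument handles (b). The main point requiring care is the logical direction at the threshold itself: the stability condition (\ref{eq2}) uses strict inequalities, so I must confirm that for every $p$ strictly exceeding the infimum $p^{\ast}_{\max}$ there genuinely exists an admissible $\{\lambda_k\}$ whose associated maximum is strictly below $p$ (this follows from the definition of infimum and the fact that the bound is achieved by a specific sequence), rather than merely at the boundary. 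I would note that stability at $p=p^{\ast}_{\max}$ itself is not asserted and need not hold, which is consistent with the open-inequality formulation of the theorem.
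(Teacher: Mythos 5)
Your proposal is correct and follows essentially the same route as the paper: reduce the multivalued Hadamard power to the moduli $\left\vert a_{k}\right\vert ^{p}$, apply the stability condition (\ref{eq2}) for a fixed $\left\{ \lambda_{k}\right\} \in\Lambda_{f}$, take logarithms (with the sign of $\ln\left\vert a_{k}\right\vert$ dictating max versus min), and optimize over $\Lambda_{f}$ via the definition of infimum/supremum. The only cosmetic quibble is your parenthetical that ``the bound is achieved by a specific sequence''--attainment of the infimum is neither needed nor guaranteed, and your argument correctly uses only that $p$ strictly exceeds it.
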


\begin{proof}
If $N_{f}$ is empty then the result is obvious. Suppose thus that $N_{f}$ is
non-empty. We can restrict our considerations to the real polynomial
$s\rightarrow s^{n}+%
{\textstyle\sum\nolimits_{k=0}^{n-1}}
\left\vert a_{k}\right\vert s^{k}$ and its $p$--th power $s\rightarrow s^{n}+%
{\textstyle\sum\nolimits_{k=0}^{n-1}}
\left\vert a_{k}\right\vert ^{p}s^{k}$ being a polynomial with nonnegative
coefficients. Indeed, if the real polynomial $s\rightarrow$ $s^{n}+%
{\textstyle\sum\nolimits_{k=0}^{n-1}}
r_{k}s^{k}$ with nonnegative coefficients $r_{0},\ldots,r_{n-1}$ satisfies the
stability condition, then every complex polynomial whose $k$--th coefficient
has the moduli equal to $r_{k}$ (for $k=0,\ldots,n-1$) satisfies it too. In
other words, the polynomial $s\rightarrow s^{n}+%
{\textstyle\sum\nolimits_{k=0}^{n-1}}
\left\vert a_{k}\right\vert ^{p}s^{k}$ satisfies the stability condition if
and only if each polynomial of the form (\ref{w03a}), and thus $f^{[p]}$, does.

Let $\left\{  \lambda_{k}\right\}  $ be an arbitrary element of $\Lambda_{f}$.
The stability condition applied to the polynomial $f^{\left[  p\right]  }$
gives%
\begin{equation}
p\ln\left\vert a_{k}\right\vert <\ln\lambda_{k}\text{{}} \label{w04}%
\end{equation}
for $k\in N_{f}$. If $\left\vert a_{k}\right\vert <1$ for $k=0,\ldots,n-1$,
then (\ref{w04}) leads to%
\begin{equation}
p>\max_{k\in N_{f}}\frac{\ln\lambda_{k}}{\ln\left\vert a_{k}\right\vert }
\label{w05a}%
\end{equation}
and in case $\left\vert a_{k}\right\vert >1$ for $k\in N_{f}$, it leads to%
\begin{equation}
p<\min_{k\in N_{f}}\frac{\ln\lambda_{k}}{\ln\left\vert a_{k}\right\vert
}\text{.} \label{w05b}%
\end{equation}
Since it is sufficient for the stability of $f^{\left[  p\right]  }$ that
inequality (\ref{w05a}) or inequality (\ref{w05b}) holds for at least one
sequence $\left\{  \lambda_{k}\right\}  $, we can repeat the same for every
$\left\{  \lambda_{k}\right\}  \in\Lambda_{f}$ and take in (\ref{w05a})
infimum over all $\left\{  \lambda_{k}\right\}  $ and supremum over all
$\left\{  \lambda_{k}\right\}  $ in (\ref{w05b}). This yields to (a) and (b).
\end{proof}

\begin{remark}
\label{rem1}As it is known (see Example 5.3 in Saydy et al. \cite{Saydy}), in
the entire family of real polynomials having all roots in the closed unit
disc, the so-called guardian map%
\[
\Phi:f\rightarrow f\left(  1\right)  f\left(  -1\right)  \det D_{f},
\]
where $D_{f}$ is some real matrix of order $n-1$ formed from the coefficients
of $f$, vanishes if and only if $f$ is unstable (has a root on the unit
circle). Thus, when for a real polynomial $f$ with nonnegative coefficients
there exists, as in Theorem~\ref{th1}, a number $p^{\ast}$ for which
$f^{\left[  p\right]  }$ is stable for $p>p^{\ast}$ (or for $p<p^{\ast}$) then
the minimal (maximal) value of such $p^{\ast}$ can be calculated as the
maximal (minimal) real zero of the function%
\[
\Phi_{f}:p\rightarrow f^{\left[  p\right]  }\left(  1\right)  f^{\left[
p\right]  }\left(  -1\right)  \det D_{f^{\left[  p\right]  }}\text{.}%
\]
In case of a complex polynomial $f$ and its integer Hadamard powers, such
$p^{\ast}$, if any, can be calculated as the maximal (minimal) real zero of
the function%
\[
\tilde{\Phi}_{f}:p\rightarrow f_{\operatorname{Re}}^{\left[  p\right]
}\left(  1\right)  f_{\operatorname{Re}}^{\left[  p\right]  }\left(
-1\right)  \det D_{f_{\operatorname{Re}}^{\left[  p\right]  }}\text{,}%
\]
where $f_{\operatorname{Re}}=\bar{f}\cdot f$ and $\bar{f}$ is a polynomial
whose coefficients are complex conjugates of these of $f$.
\end{remark}

The next theorem shows that the assumptions of Theorem~\ref{th1} are relevant.

\begin{theorem}
\label{th2}For $f\in\mathbb{\pi}_{n}\left(  \mathbb{C}\right)  $ as in
(\ref{w01}) the following hold:

\begin{enumerate}
\item[(a)] if the set $N_{f}$ is non-empty and $k^{\ast}=\min\left\{  k:k\in
N_{f}\right\}  $, then $f^{\left[  p\right]  }$ is not Schur stable for every
$p\leq0$ if $\left\vert a_{k^{\ast}}\right\vert \leq1$ and for every $p\geq0$
if $\left\vert a_{k^{\ast}}\right\vert \geq1$;

\item[(b)] if the set $N_{f,1+}=\left\{  k\in N_{f}:\left\vert a_{k}%
\right\vert >1\right\}  $ is non-empty, then $f^{\left[  p\right]  }$ is not
Schur stable for every $p\geq\beta_{\max}^{\ast}\geq0$, where%
\[
\beta_{\max}^{\ast}=\min_{k\in N_{f,1+}}\frac{\ln\binom{n}{k}}{\ln\left\vert
a_{k}\right\vert }\text{;}%
\]

\item[(c)] if the set $N_{f,1-}=\left\{  k\in N_{f}:\left\vert a_{k}%
\right\vert <1\right\}  $ is non-empty, then $f^{\left[  p\right]  }$ is not
Schur stable for every $p\leq\beta_{\min}^{\ast}\leq0$ where%
\[
\beta_{\min}^{\ast}=\max_{k\in N_{f,1-}}\frac{\ln\binom{n}{k}}{\ln\left\vert
a_{k}\right\vert }\text{.}%
\]

\end{enumerate}
\end{theorem}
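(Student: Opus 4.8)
The plan is to establish two elementary \emph{necessary} conditions for the Schur stability of a monic polynomial and then read off each assertion as a contrapositive. Writing $f^{[p]}(s)=\prod_{j=1}^{n}(s-r_{j})$ and comparing coefficients via Vieta's formulas, the coefficient of $s^{k}$ equals $(-1)^{n-k}e_{n-k}(r_{1},\dots,r_{n})$, where $e_{n-k}$ is the $(n-k)$-th elementary symmetric polynomial. Since that coefficient has modulus $|a_{k}|^{p}$—independently of which branch of $a_{k}^{p}$ is chosen when $p\notin\mathbb{Z}$, because every branch in (\ref{w03}) shares the modulus $|a_{k}|^{p}$—I obtain two inequalities valid whenever every $|r_{j}|<1$. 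First, the modulus of the product of the nonzero roots equals $|a_{k^{\ast}}|^{p}$, so stability forces $|a_{k^{\ast}}|^{p}<1$. Second, for each $k\in N_{f}$ the triangle inequality gives $|a_{k}|^{p}=|e_{n-k}|<\binom{n}{k}$, since $e_{n-k}$ is a sum of $\binom{n}{n-k}=\binom{n}{k}$ products, each of modulus $<1$.

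For part (a), I would factor $f^{[p]}(s)=s^{k^{\ast}}g(s)$ with $g$ monic of degree $n-k^{\ast}$ and constant term a branch of $a_{k^{\ast}}^{p}$ (nonzero, as $a_{k^{\ast}}\neq 0$); the nonzero roots of $f^{[p]}$ are then exactly the roots of $g$, whose moduli multiply to $|a_{k^{\ast}}|^{p}$. Hence $|a_{k^{\ast}}|^{p}\geq 1$ already precludes stability, as a product of finitely many numbers of modulus $<1$ has modulus $<1$. It remains to observe that $|a_{k^{\ast}}|\leq 1$ together with $p\leq 0$ gives $|a_{k^{\ast}}|^{p}\geq 1$, while $|a_{k^{\ast}}|\geq 1$ together with $p\geq 0$ gives the same, which yields the two stated cases.

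For parts (b) and (c), I invoke the second necessary condition: instability follows as soon as $|a_{k}|^{p}\geq\binom{n}{k}$ for some $k\in N_{f}$. Taking logarithms, this reads $p\ln|a_{k}|\geq\ln\binom{n}{k}$. For $k\in N_{f,1+}$ we have $\ln|a_{k}|>0$, so the inequality becomes $p\geq\ln\binom{n}{k}/\ln|a_{k}|$; since it suffices that this hold for a single such $k$, the polynomial $f^{[p]}$ is unstable once $p\geq\min_{k\in N_{f,1+}}\ln\binom{n}{k}/\ln|a_{k}|=\beta_{\max}^{\ast}$, and each ratio is $\geq 0$ because $\binom{n}{k}\geq 1$. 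For $k\in N_{f,1-}$ we have $\ln|a_{k}|<0$, so dividing reverses the inequality to $p\leq\ln\binom{n}{k}/\ln|a_{k}|$; taking the maximum over such $k$ gives part (c), with $\beta_{\min}^{\ast}\leq 0$.

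The only genuinely delicate points are bookkeeping rather than conceptual: tracking the sign of $\ln|a_{k}|$ when dividing—precisely what distinguishes the $\min$ in (b) from the $\max$ in (c)—and confirming that the modulus arguments are unaffected by the multivaluedness of $a_{k}^{p}$, both secured by the observation that every branch in (\ref{w03}) has modulus $|a_{k}|^{p}$ (so all $m^{n}$ polynomials are simultaneously unstable). I anticipate no serious obstacle beyond verifying the incompatibility of $|a_{k^{\ast}}|^{p}\geq 1$ with all roots lying strictly inside the unit disc, which is immediate.
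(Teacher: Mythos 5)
Your proposal is correct and follows essentially the same route as the paper: part (a) via the product of the nonzero roots having modulus $|a_{k^{\ast}}|^{p}$, and parts (b), (c) via the necessary condition $|a_{k}|^{p}<\binom{n}{k}$ with the sign of $\ln|a_{k}|$ determining the direction of the inequality. You merely make explicit some details the paper leaves implicit (Vieta's formulas and the branch-independence of the moduli for non-integer rational $p$), which is a welcome clarification but not a different argument.
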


\begin{proof}
Since $a_{k^{\ast}}$ is, with accuracy to the sign, a product of all nonzero
roots of the polynomial $f$, condition (a) is obvious. To prove (b) and (c),
recall that a necessary condition for the stability of $f$ is that $\left\vert
a_{k}\right\vert <\binom{n}{k}$, for $k=0,\ldots,n-1$. In other words, if for
some $k\in N_{f}$,
\begin{equation}
p\ln\left\vert a_{k}\right\vert \geq\ln\binom{n}{k}, \label{w06}%
\end{equation}
then $f^{\left[  p\right]  }$ is not stable. For $k\in N_{f,1+}$ (\ref{w06})
follows from $p\geq\beta_{\max}^{\ast}$ proving (b) and for $k\in N_{f,1-}$
from $p\leq\beta_{\min}^{\ast}$ proving (c).
\end{proof}

\subsection{The Schur stability of the Hadamard product of polynomials}

Now, we will focus the attention on the stability of the Hadamard product
$f\circ g$ of two complex polynomials $f,g\in\mathbb{\pi}_{n}\left(
\mathbb{C}\right)  $. As mentioned in the introductory section, the Hadamard
product of two stable (real or complex) polynomials does not have to be
stable. In case of real polynomials, Gregor and Bose \cite{Bos} noted that
when multiplying, in the Hadamard sense, the Hadamard product $f\circ g$ of
two Schur stable polynomials $f$ and $g$ by the polynomial $h\left(  x\right)
=\sum_{k=0}^{n}\binom{n}{k}^{-1}x^{k}$, then the product $f\circ g\circ h$,
called sometimes the Szeg\"{o} product of $f$ and $g$, becomes Schur stable.

The following theorem gives simple sufficient conditions for the Schur
stability of both the Hadamard and the Szeg\"{o} product of two complex polynomials.

\begin{theorem}
\label{thlast}Let $f,g\in\mathbb{\pi}_{n}\left(  \mathbb{C}\right)  $ be two
polynomials of the form (\ref{w01}).

\begin{enumerate}
\item[(a)] If $f$ satisfies the stability condition and $\left\vert
b_{k}\right\vert \leq1$ for $k\in N_{g}\cap N_{f}$, then both the Szeg\"{o}
product and the Hadamard product of $f$ and $g$ satisfy the stability
condition (and thus are stable).

\item[(b)] If $f$ satisfies the stability condition and $\left\vert
b_{k}\right\vert \leq\binom{n}{k}$ for $k\in N_{g}\cap N_{f}$, then the
Szeg\"{o} product of $f$ and $g$ satisfies the stability condition (and thus
is stable). In particular, if $f$ satisfies the stability condition and $g$ is
stable, then the Szeg\"{o} product of $f$ and $g$ satisfies the stability
condition (and thus is stable).

\item[(c)] If $f$ and $g$ satisfy the following condition
\[
\max\left\{  \left\vert a_{k}\right\vert ,\left\vert b_{k}\right\vert
\right\}  <\sqrt{\lambda_{k}}\text{,\qquad for every }k\in N_{f}\cap
N_{g}\text{, }%
\]
where $\left\{  \lambda_{k}\right\}  $ is a sequence of positive numbers whose
sum does not exceed~$1$, then both the Szeg\"{o} product and the Hadamard
product of $f$ and $g$ satisfy the stability condition (and thus are stable).
\end{enumerate}
\end{theorem}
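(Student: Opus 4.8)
The plan is to verify the stability condition directly for the relevant Hadamard and Szegő products, invoking the fact that the stability condition (the Fujiwara-type inequality \eqref{eq2}) is sufficient for Schur stability, so that it suffices in each case to exhibit an admissible sequence $\{\lambda_k\}$ of positive numbers summing to at most $1$ such that the appropriate coefficient modulus is strictly dominated by $\lambda_k$ on the relevant index set. Throughout I would use the multiplicativity of the modulus, $\left\vert a_k b_k\right\vert = \left\vert a_k\right\vert\left\vert b_k\right\vert$, and the fact that the Szegő product simply rescales the $k$-th coefficient of $f\circ g$ by $\binom{n}{k}^{-1}$.

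For part (a), since $f$ satisfies the stability condition, I would fix a witnessing sequence $\{\lambda_k\}_{k\in N_f}$ with $\left\vert a_k\right\vert<\lambda_k$ and $\sum_{k\in N_f}\lambda_k\le1$. The coefficients of $f\circ g$ are $a_k b_k$, and $N_{f\circ g}\subseteq N_f$, so for any index $k$ where $a_kb_k\neq0$ we have $k\in N_f\cap N_g$, whence $\left\vert a_k b_k\right\vert=\left\vert a_k\right\vert\left\vert b_k\right\vert<\lambda_k\cdot 1=\lambda_k$ using $\left\vert b_k\right\vert\le1$. Thus the same $\{\lambda_k\}$ witnesses the stability condition for $f\circ g$; for the Szegő product the coefficients are even smaller (multiplied by $\binom{n}{k}^{-1}\le1$), so the identical sequence works there too.

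For part (b) the factor $\binom{n}{k}^{-1}$ in the Szegő product is essential: the $k$-th coefficient of $f\circ g\circ h$ has modulus $\binom{n}{k}^{-1}\left\vert a_k\right\vert\left\vert b_k\right\vert$, and with $\left\vert b_k\right\vert\le\binom{n}{k}$ this is bounded by $\left\vert a_k\right\vert<\lambda_k$, so again the witnessing sequence for $f$ transfers. The ``in particular'' clause follows because a Schur stable $g$ necessarily satisfies $\left\vert b_k\right\vert<\binom{n}{k}$ (the standard necessary condition already recalled in the proof of Theorem~\ref{th2}). For part (c), given the hypothesis $\max\{\left\vert a_k\right\vert,\left\vert b_k\right\vert\}<\sqrt{\lambda_k}$ with $\sum\lambda_k\le1$, I would simply multiply the two bounds: $\left\vert a_k b_k\right\vert=\left\vert a_k\right\vert\left\vert b_k\right\vert<\sqrt{\lambda_k}\cdot\sqrt{\lambda_k}=\lambda_k$, so the sequence $\{\lambda_k\}$ itself witnesses the stability condition for $f\circ g$, and a fortiori for the Szegő product.

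I do not expect any serious obstacle here, as all three parts reduce to transferring a witnessing $\{\lambda_k\}$ through an elementary modulus estimate; the only point requiring care is bookkeeping on the index sets---ensuring that the inequality $\left\vert(\text{coefficient})_k\right\vert<\lambda_k$ is only needed on indices where the product coefficient is nonzero, i.e.\ on $N_{f\circ g}\subseteq N_f\cap N_g$, and that on the remaining indices of $N_f$ the hypothesis on $f$ alone already supplies a valid $\lambda_k$ even though it is not strictly needed. The mild subtlety is that the hypotheses constrain $\left\vert b_k\right\vert$ only for $k\in N_g\cap N_f$, so I would note that for $k\in N_f\setminus N_g$ the corresponding product coefficient vanishes and imposes no constraint, keeping the same $\{\lambda_k\}$ valid.
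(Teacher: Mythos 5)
Your proof is correct and is exactly the argument the paper has in mind: the paper omits the proof, stating only that the theorem is ``a simple consequence of the stability condition,'' and your transfer of the witnessing sequence $\{\lambda_k\}$ via $\left\vert a_k b_k\right\vert=\left\vert a_k\right\vert\left\vert b_k\right\vert$ (with the extra factor $\binom{n}{k}^{-1}$ for the Szeg\"{o} product and the necessary condition $\left\vert b_k\right\vert<\binom{n}{k}$ for stable $g$ in part (b)) is precisely that consequence. Your bookkeeping on the index sets, noting that only $k\in N_{f\circ g}\subseteq N_f\cap N_g$ matters, is the right way to handle the restriction of the hypotheses to $N_f\cap N_g$.
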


Instead of the proof, which is a simple consequence of the stability
condition, we make some remarks.

Firstly, note that the assumptions on $g$ in Theorem~\ref{thlast}.(a) and
Theorem~\ref{thlast}.(b) do not imply its stability. It means that for the
Schur stability of the Hadamard product or the Szeg\"{o} product of two
complex polynomials $f$ and $g$, it suffices to require slightly more than the
stability of $f$ and slightly less than the stability of $g$. As we know, the
stability of $f$ and $g$ does not suffice.

Note also, that the assumption on $f$ and $g$ in Theorem~\ref{thlast}.(c) does
not guarantee their stability. Theorem~\ref{thlast}.(c) can be thus viewed as
a sufficient condition for the stability of the Hadamard product and the
Szeg\"{o} product of two (unstable) polynomials.

We close this part with the following conclusion (its simple proof based on
the stability condition is omitted).

\begin{conclusion}
\label{thlast2} For every non-zero polynomial $f\in\mathbb{\pi}_{n}\left(
\mathbb{C}\right)  $ there exists a stable polynomial $g\in\mathbb{\pi}%
_{n}\left(  \mathbb{C}\right)  $ such that both the Szeg\"{o} product and the
Hadamard product of $f$ and $g$ are stable.
\end{conclusion}

\subsection{Does it work for polynomials of fractional orders?}

At the end, let us note that all the above results can also be applied to
fractional-order polynomials.

Recall that a fractional-order polynomial is a function of the form%
\begin{equation}
f:s\rightarrow s^{\sigma_{n}}+a_{n-1}s^{\sigma_{n-1}}+\ldots+a_{1}%
s^{\sigma_{1}}+a_{0}\text{,} \label{w07}%
\end{equation}
where $a_{0},\ldots,a_{n-1}$ are known coefficients and $\sigma_{n}%
>\sigma_{n-1}>\ldots>\sigma_{1}>0$ are known powers being real numbers. The
polynomials of non-integer order play an important role in the stability
analysis of linear time-invariant fractional-order systems (e.g. Matignon
\cite{Mat}) and have recently attracted lots of attention in the control
theory literature.

If at least one power in (\ref{w07}) is non-integer, then the fractional-order
polynomial $f$ is a multivalued function. Supposing that $\sigma_{k}=\alpha k$
for some positive number $\alpha$ ($f$ is then said to be of a commensurate
order) and substituting $s^{\alpha}=w$ in (\ref{w07}), we obtain an
integer-order polynomial $F_{f}$ associated with $f$
\[
F_{f}\left(  w\right)  =w^{n}+a_{n-1}w^{n-1}+\ldots+a_{1}w+a_{0}\text{.}%
\]
As $\alpha$ is a rational number, every root of $F_{f}$ gives a finite set of
roots of $f$ (as in (\ref{w03})). Moreover, according to $s^{\alpha}=w$, $f$
is Schur stable if and only if $F_{f}$ is. This shows that Theorems
\ref{th1}--\ref{thlast} and Conclusion \ref{thlast2} can be applied to both
integer-order and fractional-order polynomials.

\section{Numerical experiments}

In closing, we shall give two numerical examples completing and illustrating
the results developed in this work.

\begin{example}
\label{ex1}Consider two real polynomials $f\ $and $g$%
\begin{align*}
f\left(  s\right)   &  =s^{5}+0.9s^{2}+0.2s+0.7,\\
g\left(  s\right)   &  =s^{5}+2.5s^{2}+2s+3,
\end{align*}
both having zeros outside the unit disc and thus unstable. In order to
illustrate Theorem \ref{th1} we need to approximate value (\ref{wth1}) for $f$
and value (\ref{wth2}) for $g$. The approximations were obtained by generating
sequences $\{\lambda_{k}\}$ of the form $\{ml/n^{2},m\left(  n-l\right)
/n^{2},\left(  n-m\right)  /n\}$ for $n=10^{3}$ and $m,l=1,\ldots,n-1$, and
performing necessary computations. The approximation of (\ref{wth1}) for $f$
is $p_{\max}^{\ast}\approx3.40372$, whereas the minimal value of $p^{\ast}$
such that $f^{\left[  p\right]  }$ is stable for every $p>p^{\ast}$ (see
Remark~\ref{rem1}) is equal to $p^{\ast}\approx3.35457$. The approximation of
(\ref{wth2}) for $g$ is $p_{\min}^{\ast}\approx-1.24121$, whereas the minimal
value of $p_{\ast}$ such that $g^{\left[  p\right]  }$ is stable for every
$p<p_{\ast}$ is equal to $p_{\ast}\approx-1.01579$.
\end{example}

\begin{figure}[tbh]
\begin{center}
\includegraphics[scale=0.6]{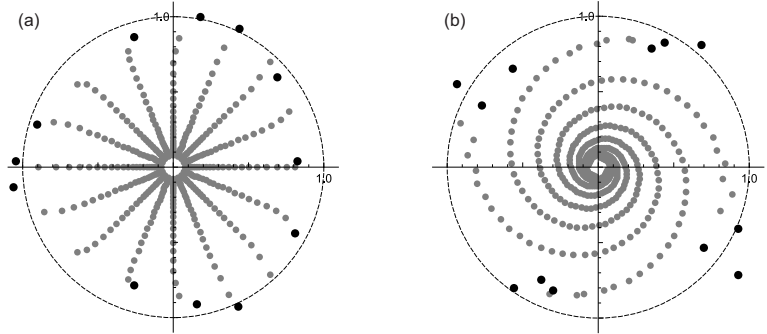}
\end{center}
\caption{(a) the zeros of $f^{\left[  p\right]  }$ marked with black dots for
$1\leq p\leq3$ and with gray dots for $4\leq p\leq100$; (b) the zeros of
$g^{\left[  q\right]  }$ marked with black dots for $-3\leq q\leq-1$ and with
gray dots for $-100\leq q\leq-4$.}%
\end{figure}

\begin{example}
Consider two complex polynomials $f\ $and $g$%
\begin{align*}
f\left(  s\right)   &  =s^{4}+\left(  0.2-0.4i\right)  s^{3}+0.7s-0.9i,\\
g\left(  s\right)   &  =s^{4}-1.5s^{3}+\left(  2-i\right)  s^{2}+1-0.5i.
\end{align*}
Proceeding as in Example~\ref{ex1} we get $p_{\max}^{\ast}\approx3.69323$ for
$f$ and $p_{\min}^{\ast}\approx-3.40696\ $for $g$. To confirm the results we
have plotted in Fig.~1 the zeros of $f^{\left[  p\right]  }$ and $g^{\left[
q\right]  }$ for integer values of $p\in\{1,\ldots,10^{2}\}$ and $q\in\left\{
-10^{2},\ldots,-1\right\}  $. The $p$-th Hadamard power of $f$ occurs unstable
for $1\leq p\leq3$ and becomes stable for $p\geq4$, as expected. Similarly,
the $q$-th Hadamard power of $g$ occurs unstable for $-1\leq q\leq-3$ and
becomes stable for $q\leq-4$.
\end{example}

\section*{Acknowledgments}

This research work was partially supported by the Faculty of Applied
Mathematics AGH UST statutory tasks (grant no. 11.11.420.004) within subsidy
of Ministry of Science and Higher Education.

\section*{Appendix A}

To prove that (\ref{eq2}) is a sufficient condition for the Schur stability of
the complex polynomial $f$ of the form
\[
f\left(  s\right)  =s^{n}+a_{n-1}s^{n-1}+\ldots+a_{1}s+a_{0}%
\]
note that
\[
\left\vert f\left(  s\right)  \right\vert \geq\left\vert s\right\vert ^{n}-%
{\textstyle\sum\nolimits_{k\in N_{f}}}
\left\vert a_{k}\right\vert \left\vert s\right\vert ^{k}\text{,}%
\]
where $N_{f}=$ $\{k\in\left\{  0,\ldots,n-1\right\}  :a_{k}\neq0\}$. It means
that if for every $k\in N_{f}$
\[
\lambda_{k}\left\vert s\right\vert ^{n}>\left\vert a_{k}\right\vert \left\vert
s\right\vert ^{k},
\]
where $\left\{  \lambda_{k}\right\}  $ is a sequence of positive numbers whose
sum does not exceed $1$, then $f\left(  s\right)  \neq0$. In other words, if
$s$ is a zero of $f$ then
\[
\left\vert s\right\vert \leq\max_{k\in N_{f}}\left(  \frac{\left\vert
a_{k}\right\vert }{\lambda_{k}}\right)  ^{1/n-k}.
\]
If $\left\vert a_{k}\right\vert <\lambda_{k}$ for $k\in N_{f}$, then every
zero of $f$ has moduli less than $1$ and thus $f$ is Schur stable. This
completes the proof.

\end{document}